\documentclass[leqno]{amsart}
\usepackage{amsfonts,amssymb,amsmath,amsgen,amsthm}
\usepackage{pdfsync}

\usepackage{relsize} 
\usepackage{bigints}
\usepackage{hyperref,color}

\theoremstyle{plain}
\newtheorem{theorem}{Theorem}[section]
\newtheorem{definition}[theorem]{Definition}
\newtheorem{lemma}[theorem]{Lemma}
\newtheorem{corollary}[theorem]{Corollary}
\newtheorem{proposition}[theorem]{Proposition}

\theoremstyle{remark}

\def\d{{\partial}}
\def\R{{\mathbb R}}
\def\T{{\mathbb T}}
\def\N{{\mathbb N}}
\def\Z{{\mathbb Z}}

\def\P{\mathbf P}
\def\Q{\mathbf Q}
\def\dd{\mathrm d}
\def\eps{\varepsilon}
\newcommand{\seps}{\sigma_{\eps}}
\newcommand{\tseps}{\tilde{\sigma}_{\eps}}
\newcommand{\reps}{\rho_{\eps}}

\newcommand{\ueps}{u_{\eps}}
\newcommand{\phie}{\phi_{\eps}}
\newcommand{\dive}{\mathop{\mathrm{div}}}
\newcommand{\Feps}{F_{\eps}}

\newcommand{\Heps}{H_{\eps}}
\newcommand{\eith}{e^{it\Heps}}
\newcommand{\Ueps}{U_{\eps}}
\newcommand{\Jeps}{J_{\eps}}
\newcommand{\tJeps}{\tilde{J}_{\eps}}
\def\P{\mathbf P}
\def\Q{\mathbf Q}
\newcommand{\supp}{\operatorname{supp}}

\makeindex             


\date\today

\title[Acoustic oscillations in systems for quantum fluids]{Analysis of acoustic oscillations for a class of hydrodynamic systems describing quantum fluids}
%
\author[P. Antonelli]{Paolo Antonelli}
\address{
Gran Sasso Science Institute, viale Francesco Crispi, 7, 67100 L'Aquila, Italy}
\email{paolo.antonelli@gssi.it}

\author[L.E. Hientzsch]{Lars Eric Hientzsch}
\address{
Univ. Grenoble Alpes, CNRS, Institut Fourier, 100 rue des Math{\'e}matiques, 38610 Gi{\`e}res, France}
\email{larseric.hientzsch@univ-grenoble-alpes.fr}

\author[P. Marcati]{Pierangelo Marcati}
\address{
Gran Sasso Science Institute, viale Francesco Crispi, 7, 67100 L'Aquila, Italy}
\email{pierangelo.marcati@gssi.it}

%
\begin{document}

\begin{abstract}Hydrodynamic systems for quantum fluids are systems for compressible fluid flows for which quantum effects are macroscopically relevant. We discuss how the presence of the dispersive tensor describing the quantum effects alters the acoustic dispersion at the example of the Quantum Hydrodynamic system (QHD). For the QHD system the dispersion relation is given by the Bogoliubov dispersion relation for weakly interacting Bose gases. We provide refined Strichartz estimates allowing for an accurate control of acoustic oscillations. Applications to the low Mach number limit for the quantum Navier-Stokes equations and the QHD system are discussed. The dispersive analysis generalizes to some Euler- and Navier-Stokes-Korteweg systems for capillary fluids.
\end{abstract}

\maketitle

\section{Introduction}
Acoustic oscillations are significant for the behavior of solutions to hydrodynamic systems and their control is required for e.g. the Cauchy Theory, large time behavior and singular limits. For instance, it is well-known that fast acoustic oscillations occur in the low Mach number regime \cite{FN}. To obtain a suitable control, one may exploit the dispersion of acoustic waves. Dispersive phenomena highly depend on the domain, here we consider $\R^d$ for $d\geq 2$. The main objective of these notes is twofold: first, to examine how the presence of quantum effects alters the acoustic dispersion and elucidate its link to the Bogoliubov excitation spectrum \cite{B} for weakly interacting Bose gases. Second, we provide refined Strichartz estimates that allow for an accurate control of the acoustic dispersion. These refined estimates are crucial for the study of the incompressible limit in the class of weak solutions for general ill-prepared data \cite{AHM}. The prototype model for quantum fluids, i.e. compressible, inviscid fluids subject to quantum (dispersive) effects is the  quantum hydrodynamic system (QHD) \cite{AHMZ, AM, AM16} that reads
\begin{equation}\label{eq:QHDintro}
\begin{aligned}
&\d_t\rho+\dive J=0\\
&\d_tJ+\dive\left(\frac{J\otimes J}{\rho}\right)+\nabla p(\rho)=\frac{1}{2}\rho\nabla\left(\frac{\Delta\sqrt{\rho}}{\sqrt{\rho}}\right).
\end{aligned}
\end{equation}
The dynamics is posed on $\R^d$ for $d\geq 2$ and equipped with non-vanishing boundary conditions at infinity
\begin{equation}\label{eq:boundaryQHDintro}
    \rho(x)\rightarrow 1, \qquad |x|\rightarrow \infty.
\end{equation}
The unknowns are the mass density $\rho$ and the current density $J$, the pressure is denoted by $p(\rho)$ and given by the barotropic $\gamma$-law $p(\rho)=\rho^{\gamma}$ with $\gamma>1$. The nonlinear third-order dispersive tensor on the right-hand side accounts for the quantum effects of fluid. Beyond superfluidity \cite{GP} and Bose-Einstein condensation (BEC) \cite{PS}, the QHD and related systems appear in superconductivity, quantum plasmas and the modelling of semi conductor devices, see \cite{AM16}, also \cite{J} for related viscous models. In a more general framework, the QHD system belongs to the class of Euler-Korteweg fluids describing inviscid capillary fluids. The dispersive analysis described here generalizes to certain Euler / Navier-Stokes-Kortweg systems \cite{BG,BDL,BGL}, e.g. to shallow water waves. Our purpose is to discuss the behavior of perturbations of the constant solution $(\rho=1,J=0)$ for which the non-trivial far-field behavior \eqref{eq:boundaryQHDintro} provides the suitable framework. Equipped with \eqref{eq:boundaryQHDintro}, system \eqref{eq:QHDintro} arises in application to BEC and superfluidity close to the $\lambda$-point \cite{PS} and exhibits a variety of special solutions. We notice that system \eqref{eq:QHDintro} is Hamiltonian, whose energy
\begin{equation}\label{eq:en_QHDintro}
\mathcal{E}(\rho,J)=\int\frac{1}{2}|\nabla\sqrt{\rho}|^2+\frac12\frac{|J|^2}{\rho}+\pi(\rho) \dd x,
\end{equation}
is formally conserved along the flow of solutions. The internal energy is given by
\begin{equation*}
    \pi(\rho)=\frac{\rho^{\gamma}-\gamma(\rho-1)}{\gamma(\gamma-1)},
\end{equation*}
and encodes the far-field behavior \eqref{eq:boundaryQHDintro}.
From a mathematical point of view, system \eqref{eq:QHDintro} is a compressible Euler system augmented by a nonlinear stress tensor encoding the quantum effects
\begin{equation}\label{eq:Bohmintro}
\frac{1}{2}\rho\nabla\left(\frac{\Delta\sqrt{\rho}}{\sqrt{\rho}}\right)=\frac{1}{4}\nabla\Delta\rho- \dive(\nabla\sqrt{\rho}\otimes\nabla\sqrt{\rho})=\frac{1}{4}\dive(\rho\nabla^2\log\rho),
\end{equation}
under suitable regularity assumptions.
The stress tensor is commonly referred to as Bohm potential or quantum pressure. A first heuristic argument to elucidate the difference between \eqref{eq:QHDintro} and the compressible Euler equations consists in determining the length scales on which \eqref{eq:Bohmintro} is relevant. System \eqref{eq:QHDintro} is given in dimensionless form. If $L$ denotes the characteristic length scale for variations of the density $\rho$, then comparing the order of magnitude yields
\begin{equation*}
    p(\rho)\sim \rho^{\gamma}, \qquad p_{quantum}=\frac{1}{2}\rho\nabla^2\log\rho\sim \frac{1}{2L^2},
\end{equation*}
Hence, the quantum pressure is relevant up to length-scales of order $1$, while it becomes negligible for large length scales. In the context of BEC and superfluidity, the characteristic length scale $L$ is given by the healing length $\eps$. It determines the core size of quantum vortices ($d=2$) being proportional to $\eps$, \cite{P}. The suitable asymptotic regime to study vortices is introduced via the parabolic scaling
\begin{equation*}
    t'=\frac{t}{\eps^2}, \qquad x'=\frac{x}{\eps}, \qquad J'(t',x')=\frac{1}{\eps}J(\frac{t}{\eps^2},\frac{x}{\eps}),
\end{equation*}
in which the vortex core is of infinitesimal size $O(\eps)$, and density variations occur over distances of $O(\eps)$, the quantum pressure is relevant for length scales of $O(\eps)$. Applying this scaling to \eqref{eq:QHDintro}, we obtain
\begin{equation}\label{eq:scaledQHD}
\begin{aligned}
&\d_t\reps+\dive \Jeps=0\\
&\d_t \Jeps+\dive\left(\frac{\Jeps\otimes \Jeps}{\reps}\right)+\frac{1}{\eps^2}\nabla p(\reps)=\frac{1}{2}\reps\nabla\left(\frac{\Delta\sqrt{\reps}}{\sqrt{\reps}}\right).
\end{aligned}
\end{equation}
Hence, the $\eps$-limit corresponds to a low Mach number regime. In order to study acoustic oscillations, we linearize system \eqref{eq:scaledQHD} around the constant solution $(\rho,J)=(1,0)$. We consider the density fluctuations $\seps:=\eps^{-1}(\reps-1)$ and obtain
\begin{equation}\label{eq:linear}
\begin{aligned}
&\partial_t\seps+\frac{1}{\eps}\dive \Jeps=0,\\
&\partial_t \Jeps+\frac{1}{\eps}\nabla\seps-\frac{\eps}{4}\nabla\Delta \seps=F_{\eps},
\end{aligned}
\end{equation}
where we have used \eqref{eq:Bohmintro} and denote
\begin{equation*}
    F_{\eps}=-(\gamma-1)\nabla\pi_{\eps}(\reps)-\dive\left(\frac{\Jeps\otimes\Jeps}{\reps}\right)+\dive\left(\nabla\sqrt{\reps}\otimes\nabla\sqrt{\reps}\right).
\end{equation*}
The remainder of this paper is dedicated to the dispersive analysis of system \eqref{eq:linear} for small $\eps>0$. Section \ref{sec:linearized} derives the augmented dispersion relation and discusses its physical motivations. 
The refined Strichartz estimates are introduced in Section \ref{sec:dispersive}. Finally, we discuss applications in Section \ref{sec:app}.

\section{Augmented dispersion relation}\label{sec:linearized}
Projecting the second equation on irrotational fields by means of the Leray-Helmholtz projections, defined by $\Q:=\nabla\Delta^{-1}\dive$ and $\P:=\mathbb{Id}-\Q$ respectively, we obtain from \eqref{eq:linear} the system describing the acoustic waves 
\begin{equation}\label{eq:linearQ}
\begin{aligned}
&\partial_t\seps+\frac{1}{\eps}\dive \Q(\Jeps)=0,\\
&\partial_t \Q(\Jeps)+\frac{1}{\eps}\nabla\seps-\frac{\eps}{4}\nabla\Delta \seps=\Q(F_{\eps}).
\end{aligned}
\end{equation}
Formally, the density fluctuations satisfy the Boussinesq type equation
\begin{equation}\label{eq:densityfluct}
    \partial_{tt}^2\seps-\frac{1}{\eps^2}\Delta(1-\frac{\eps^2}{4}\Delta)\seps=-\dive\Q(\Feps).
\end{equation}
By looking for plane wave solutions of the type $e^{i\xi\cdot x+i\omega t}$ to the homogeneous equations, one recovers the dispersion relation
\begin{equation}\label{eq:dispersionrelation}
    \omega=\frac{|\xi|}{\eps}\sqrt{1+\left(\frac{\eps}{2}|\xi|\right)^2}. 
\end{equation}
We notice that the dispersion relation behaves like $\omega\sim \frac{|\xi|}{\eps}$ for frequencies below the threshold $\frac{1}{\eps}$ while it behaves like $\omega\sim |\xi|^2$ for frequencies above the threshold. The dispersion relation is hence wave-like for low frequencies and Schr{\"o}dinger like for high frequencies. This distinguishes the dispersion of acoustic oscillations from the one for classical compressible fluids where the acoustic dispersion is governed by the wave equation with speed $\frac{1}{\eps}$. The augmented dispersion relation allows one to infer better dispersive estimates leading to a faster decay of acoustic oscillations, see Corollary \ref{coro:dispersive}.
Before proceeding to the mathematical analysis in Section \ref{sec:dispersive}, we provide a brief physical motivation for the augmented dispersion.

System \eqref{eq:QHDintro} arises as model in a variety of contexts mentioned in the introduction. Here, we focus on the derivation of the QHD model for Bose-Einstein condensates and discuss how the dispersion relation \eqref{eq:dispersionrelation} is linked to the Bogoliubov dispersion law \cite{B} describing the elementary excitations for weakly interacting Bose gases. In 1947, Bogoliubov  introduces the first microscopic theory for weakly interacting Bose gases. In \cite{B}, it is shown that the quantum many body Hamiltonian describing the Bose gas can be approximated by a Hamiltonian of independent quasi particles (Bogoliubov approximation). The Hamiltonian of quasi-particles turns out to be diagonalizable and reads to leading order
\begin{equation}\label{eq:HM}
    H=E_0+\sum_p c(p)\hat{b}^\dagger\hat{b},
\end{equation}
where $\hat{b}^{\dagger}$ and $\hat{b}$ denote creation and annihilation operators for quasi-particles respectively. 
Bogoliubov derives the ground state energy $E_0$ and the spectrum of elementary excitations $c(p)$ described by the Bogoliubov dispersion law,
\begin{equation}\label{eq:Bmom}
    c(p)=\sqrt{\frac{gn}{m}p^2+\left(\frac{p^2}{2m}\right)^2}.
\end{equation}
The excitation spectrum is phonon-like for small momenta and of free particle nature for large momenta. Moreover, \eqref{eq:Bmom} allowed to establish a link between BEC and superfluidity on microscopic level through Landau's criterion of superfluidity \cite{PS}. The validity of \eqref{eq:Bmom} has only recently been rigorously proven, see \cite{BBCS, S}. The Gross-Pitaevskii theory \cite{P, PS} introduced a macroscopic description of the weakly interacting Bose gas in terms of an effective wave-function, also called order-parameter, satisfying the adimensionalized Gross-Pitaevskii equation,
\begin{equation}\label{eq:GP}
    i\partial_t\psi=-\frac{1}{2}\Delta\psi+(|\psi|^2-1)\psi,
\end{equation}
where the characteristic length scale is given in units of the healing length $\eps=\frac{\hbar}{\sqrt{2}mc}$. The parameters $\hbar, m, c $ denote respectively, the reduced Planck constant, the mass and the sound speed. Physically, the healing length $\eps$ describes for a perturbed condensate the distance over which the density reaches its unperturbed uniform value. The Madelung transform establishes the formal equivalence of \eqref{eq:GP} and \eqref{eq:QHDintro}, see e.g. \cite{AHMZ}. The dispersion relation \eqref{eq:Bmom} arises naturally in the hydrodynamic framework of the QHD system, see \eqref{eq:dispersionrelation} with $\eps=1$ and \eqref{eq:linearQ} and is recovered for \eqref{eq:GP} by looking for plane wave solutions to a suitable linearization of \eqref{eq:GP}.\\
To conclude this section, we mention that \eqref{eq:QHDintro} can be considered in the framework of Korteweg fluids. The presence of a capillary tensor leads to an augmented dispersion relation. The density fluctuations for a fairly general class of Korteweg fluids \cite{BG, BGL} are still governed by the Boussinesq type equation \eqref{eq:densityfluct}.

\section{Dispersive analysis}\label{sec:dispersive}
In this section, we prove refined Strichartz estimates for a symmetrization of the linearized system \eqref{eq:linear}. Our analysis can be considered as the $\eps$-dependent version of the dispersive analysis in \cite{GNT05, GNT09} for the linearization of \eqref{eq:GP}. We stress that the non-homogeneity of the dispersion relation does not allow for a separation of scales and thus the $\eps$-dependent version is not direct consequence of a scaling argument as opposed to classical fluid dynamics. System \eqref{eq:linear} has been studied in \cite{BDS10} in the context of the linear wave regime for \eqref{eq:GP}. In \cite{BDS10}, the acoustic decay is achieved by considering two different regimes for low and high frequencies. This frequency splitting entails a loss of derivatives. Proposition \ref{prop:Strichartz_A} and Proposition \ref{prop:Strichartz_B} improve the estimates of \cite{BDS10} as decay of acoustic waves with arbitrarily small loss of regularity is achieved. 
This paragraph is based on \cite{AHM} for $d\geq 3$. We complement the analysis of \cite{AHM} by providing an estimate for $d=2$.\\
We introduce the parameter $\kappa$ to emphasize the contribution of the dispersive tensor to the dispersion relation and to provide more general estimate directly applicable in other contexts such as Euler-/ Navier-Stokes-Korteweg fluids \cite{BG, BDL,BGL} where $\kappa$ depends on the linearization of the capillarity tensor. To recover \eqref{eq:scaledQHD}, one sets $\kappa=\frac{1}{2}$. We symmetrize \eqref{eq:linear} by introducing the change of variables
\begin{equation}\label{eq:defitilde}
\tseps:=(1-\eps^2\kappa^2\Delta)^{\frac{1}{2}} \seps, \hspace{1.5cm} \tJeps:=(-\Delta)^{-\frac{1}{2}}\dive\Jeps,
\end{equation}
and check that if $(\seps,\Q(\Jeps))$ is a solution of \eqref{eq:linear} then $(\tseps,\tJeps)$ satisfies the symmetrized system
\begin{equation}\label{eq:symAW}
\begin{aligned}
&\partial_t\tseps+\frac{1}{\eps}(-\Delta)^{\frac{1}{2}}(1-\kappa^2\eps^2\Delta)^{\frac{1}{2}}\tJeps=0,\\
&\partial_t\tJeps-\frac{1}{\eps}(-\Delta)^{\frac{1}{2}}(1-\kappa^2\eps^2\Delta)^{\frac{1}{2}}\tseps=\tilde{F}_{\eps},
\end{aligned}
\end{equation}
where $\tilde{F}_{\eps}=(-\Delta)^{-\frac{1}{2}}\dive F_{\eps}$. The bounds on $(\tseps, \tJeps)$ will then allow for the desired control of $(\seps,\Q(\Jeps))$, see Corollary \ref{coro:3d}. We introduce the operator $\Heps=\frac{1}{\eps}\sqrt{(-\Delta)(1-(\eps\kappa)^2\Delta)}$ being defined as Fourier multiplier with symbol $\phie(|\xi|)=\kappa^{-1}\omega(\kappa|\xi|)$ with $\omega$ as in \eqref{eq:dispersionrelation}. System \eqref{eq:symAW} is characterized by the unitary semigroup operator $\eith$. Denoting $\phi(r)=r\sqrt{1+\kappa^2r^2}$, we observe that $\phie$ has the scaling property
\begin{equation}\label{eq:phiescaled}
    \phie(|\xi|)=\frac{1}{\eps^2}\phi(\eps|\xi|),
\end{equation}
that allows us to infer the $\eps$-dependent estimate from the ones available for $e^{itH_1}$, namely for $\eps=1$. The respective operator has been studied in \cite{GNT05, GNT09}. We rely on Theorem 2.2 in \cite{GNT05} providing the following stationary phase estimate. The estimate is known to be sharp \cite{COX}.
\begin{lemma}[\cite{GNT05}]\label{lem:GNT}
Let $\phi(r)\in C^{\infty}(0,\infty)$ be such that 
\begin{enumerate}
\item $\phi'(r),\phi''(r)>0 $ for all $r>0$.
\item $\phi'(r)\sim \phi'(s)$ and $\phi''(r)\sim \phi''(s)$ for all $0<s<r<2s$.
\item $\left|\phi^{(k+1)}(r)\right|\lesssim \frac{\phi'(r)}{r^k}$ for all $r>0$ and $k\in \N$.
\end{enumerate}
Let $\chi(r)$ be a dyadic cut-off function with support around $r \sim R$ satisfying
\begin{equation*}
|\chi^{(k+1)}(r)|\leq C R^{-k},
\end{equation*}
for an absolute constant $C=C(k)>0$ independent from $r$ and $R$. Then, for $d
\geq 2$ if
\begin{equation*}
I_\phi(t, x, R):=\int_{\R^d}e^{it\phi(|\xi|)+ix\cdot\xi}\chi(|\xi|)\,\dd \xi
\end{equation*}
we have
\begin{equation}\label{eq:estGNT}
\sup_{x\in\R^d}\left|I_{\phi}(t,x,R)\right|\lesssim t^{-\frac{d}{2}}\left(\frac{\phi'(R)}{R}\right)^{-\frac{d-1}{2}}\left(\phi''(R)\right)^{-\frac{1}{2}}.
\end{equation}
\end{lemma}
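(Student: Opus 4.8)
The plan is to reduce the $d$-dimensional oscillatory integral to a scalar one by exploiting the radial structure of the phase $\phi(|\xi|)$, and then to resolve it by stationary phase. First I would integrate over the angular variables: since only the factor $e^{ix\cdot\xi}$ breaks rotational symmetry, setting $\rho=r|x|$ and using the classical identity $\int_{S^{d-1}}e^{i\rho\,\omega\cdot e}\,\dd\sigma(\omega)=c_d\,\rho^{-(d-2)/2}J_{(d-2)/2}(\rho)$ turns $I_\phi$ into
\[
I_\phi(t,x,R)=c_d\,|x|^{-\frac{d-2}{2}}\int_0^\infty e^{it\phi(r)}\chi(r)\,r^{\frac d2}\,J_{\frac{d-2}{2}}(r|x|)\,\dd r,
\]
so that the estimate becomes a statement about a one-dimensional oscillatory integral depending on the single parameter $|x|$.

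Next I would insert the asymptotics of the Bessel function. On the support of $\chi$, in the regime $r|x|\gtrsim 1$, I would use $J_{\nu}(\rho)=\rho^{-1/2}\big(e^{i\rho}b_+(\rho)+e^{-i\rho}b_-(\rho)\big)$ with $b_\pm$ symbols of order zero; this produces two oscillatory integrals with total phase $\psi_\pm(r)=t\phi(r)\pm r|x|$ and amplitude $\sim\chi(r)\,r^{(d-1)/2}b_\pm(r|x|)$, carrying the prefactor $|x|^{-(d-1)/2}$. Since $\phi'>0$ by condition (1), the phase $\psi_+$ has no critical point and is treated by repeated integration by parts; here condition (3), bounding $\phi^{(k+1)}$ by $\phi'(r)/r^k$, together with the symbol bounds on $b_\pm$, guarantees that each integration by parts gains a genuine power of $(t\phi'(R))^{-1}$ without accumulating growth. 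The essential contribution comes from $\psi_-$, whose critical point $r_0$ solves $t\phi'(r_0)=|x|$; condition (1) ensures this point is unique and that $\psi_-''(r_0)=t\phi''(r_0)>0$ is nondegenerate.

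The decisive step is the stationary-phase estimate for $\psi_-$. Applying van der Corput's lemma with the nondegeneracy $\psi_-''\sim t\phi''(R)$ and amplitude of size $R^{(d-1)/2}$ yields
\[
|I_\phi|\lesssim |x|^{-\frac{d-1}{2}}\big(t\phi''(R)\big)^{-\frac12}R^{\frac{d-1}{2}}.
\]
Condition (2) is precisely what lets me freeze $\phi'$ and $\phi''$ at the value $R$ throughout the dyadic support of $\chi$, and in particular identify $|x|=t\phi'(r_0)\sim t\phi'(R)$ at the critical configuration; substituting this gives exactly $t^{-d/2}\big(\phi'(R)/R\big)^{-(d-1)/2}\big(\phi''(R)\big)^{-1/2}$, which is the claimed bound \eqref{eq:estGNT}. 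For the complementary regime $r|x|\lesssim 1$ I would instead use $J_\nu(\rho)\sim\rho^\nu$, so the spherical factor is bounded and the remaining purely radial integral, again having no critical point since $\phi'>0$, decays faster by non-stationary phase; taking the supremum over $x$, the worst case is therefore the stationary configuration above.

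I expect the main obstacle to be the uniformity in $x$ together with the transition zone $r|x|\sim 1$ where the two Bessel regimes meet: one must interpolate between the small- and large-argument behaviour while keeping the van der Corput estimate stable under the $|x|$-dependent derivatives produced by the symbols $b_\pm$. It is exactly here that condition (3) is indispensable, since the higher-order bounds on $\phi$ are what make the integration-by-parts errors summable and guarantee that the final estimate is uniform over all $x\in\R^d$.
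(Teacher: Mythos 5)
Your proposal is correct and follows essentially the same route as the paper's source: the paper does not prove this lemma itself but imports it as Theorem 2.2 of \cite{GNT05}, whose proof proceeds exactly as you outline --- radial reduction via the Bessel identity, splitting into the regimes $r|x|\lesssim 1$ and $r|x|\gtrsim 1$ with the large-argument asymptotics $J_\nu(\rho)=\rho^{-1/2}\bigl(e^{i\rho}b_+(\rho)+e^{-i\rho}b_-(\rho)\bigr)$, non-stationary phase for the phase $t\phi(r)+r|x|$, and a van der Corput estimate at the unique nondegenerate critical point of $t\phi(r)-r|x|$, with $\phi'$ and $\phi''$ frozen at $R$ by hypothesis (2). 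The one step worth making explicit in a full write-up is that closing the non-stationary regimes (no critical point in the support, and the small-argument Bessel regime) requires dominating the integration-by-parts bounds by the right-hand side of \eqref{eq:estGNT}, which rests precisely on hypothesis (3) with $k=1$, i.e.\ $R\,\phi''(R)\lesssim\phi'(R)$ --- a role for condition (3) that you correctly single out as the crux of the uniformity in $x$.
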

We notice that as $\phi$ is radial one has
\begin{equation}\label{eq:h}
    h(r):=\det(Hess(\phi(r))=\left(\frac{\phi'(r)}{r}\right)^{d-1}\phi''(r),
\end{equation}
so that one recasts $h(r)^{-\frac{1}{2}}$ on the right-hand side of \eqref{eq:estGNT}.
\begin{lemma}\label{lem:B3}
Let $h$ be defined as in \eqref{eq:h}.
There exists $C>0$ such that for any $r\in [0,\infty]$,
\begin{equation}\label{eq:boundh}
    0\leq h(r)^{-\frac{1}{2}}\leq C \frac{1}{\kappa^{\frac{d}{2}}}\left( \frac{\kappa r}{\sqrt{1+(\kappa r)^2}}\right)^{\frac{d-2}{2}}.
\end{equation}
For $d=2$, there exists $C>0$ such that for any $r \in [0,\infty)$,
    \begin{equation*}
     \frac{1}{C}\kappa^{-1}\leq h(r)^{-\frac{1}{2}}\leq C\kappa^{-1}.
    \end{equation*}
\end{lemma}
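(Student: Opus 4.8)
The plan is to reduce the whole statement to elementary two-sided comparison bounds for $\phi'$ and $\phi''$, which I then substitute directly into the formula \eqref{eq:h} for $h$. First I would differentiate $\phi(r)=r\sqrt{1+\kappa^2r^2}$ twice and record the explicit expressions
\begin{equation*}
\phi'(r)=\frac{1+2\kappa^2r^2}{\sqrt{1+\kappa^2r^2}},\qquad \phi''(r)=\frac{\kappa^2 r\,(3+2\kappa^2 r^2)}{(1+\kappa^2r^2)^{3/2}},
\end{equation*}
which in particular are positive on $(0,\infty)$, consistent with hypothesis (1) of Lemma \ref{lem:GNT}.

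Next, using the trivial chains $1+\kappa^2 r^2\le 1+2\kappa^2 r^2\le 2(1+\kappa^2 r^2)$ and $2(1+\kappa^2 r^2)\le 3+2\kappa^2 r^2\le 3(1+\kappa^2 r^2)$, I obtain the comparabilities
\begin{equation*}
\sqrt{1+\kappa^2 r^2}\le \phi'(r)\le 2\sqrt{1+\kappa^2 r^2},\qquad \frac{2\kappa^2 r}{\sqrt{1+\kappa^2 r^2}}\le \phi''(r)\le \frac{3\kappa^2 r}{\sqrt{1+\kappa^2 r^2}}.
\end{equation*}
These are the only inequalities used, and each is immediate. For the general bound \eqref{eq:boundh} I need only a lower bound on $h$, which follows from the lower bounds on $\phi'$ and $\phi''$:
\begin{equation*}
h(r)=\Big(\frac{\phi'(r)}{r}\Big)^{d-1}\phi''(r)\ge \Big(\frac{\sqrt{1+\kappa^2r^2}}{r}\Big)^{d-1}\frac{2\kappa^2 r}{\sqrt{1+\kappa^2 r^2}}=2\kappa^2\,\frac{(1+\kappa^2 r^2)^{\frac{d-2}{2}}}{r^{d-2}}.
\end{equation*}
Taking the $-\tfrac12$ power and rewriting $\kappa^{-1}\big(r^2/(1+\kappa^2 r^2)\big)^{\frac{d-2}{4}}=\kappa^{-d/2}\big(\kappa r/\sqrt{1+(\kappa r)^2}\big)^{\frac{d-2}{2}}$ then yields \eqref{eq:boundh} with $C=2^{-1/2}$; the lower bound $h(r)^{-1/2}\ge 0$ is trivial.

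For $d=2$ the exponent $d-1=1$ and the factor $(1+\kappa^2 r^2)^{\frac{d-2}{2}}=1$, so combining \emph{both} comparabilities gives $2\kappa^2\le h(r)\le 6\kappa^2$ uniformly in $r$, which is precisely the two-sided claim with $C=\sqrt6$. The only point deserving a word of care is the endpoint behavior: $\phi''$ vanishes at $r=0$ while $\phi'(r)/r$ blows up there, so $h(0)$ is a $0\cdot\infty$ form. I would dispatch this by noting that the displayed estimates extend by continuity to the closed interval, matching both sides of \eqref{eq:boundh} (for $d\ge3$ one has $h(r)^{-1/2}\to 0$ as $r\to 0$ and $h(r)^{-1/2}\to\kappa^{-d/2}$ as $r\to\infty$). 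I do not expect a genuine obstacle: the entire statement is an elementary consequence of the explicit derivatives, and the main task is the bookkeeping needed to recognize the right-hand side of \eqref{eq:boundh} as the homogenized form of $\kappa^{-1}\big(r^2/(1+\kappa^2 r^2)\big)^{\frac{d-2}{4}}$.
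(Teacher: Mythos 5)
Your proof is correct: the explicit formulas for $\phi'$ and $\phi''$ are right, the two-sided comparisons $\sqrt{1+\kappa^2r^2}\le\phi'(r)\le 2\sqrt{1+\kappa^2r^2}$ and $2\kappa^2 r(1+\kappa^2r^2)^{-1/2}\le\phi''(r)\le 3\kappa^2 r(1+\kappa^2r^2)^{-1/2}$ hold, the resulting lower bound on $h$ yields \eqref{eq:boundh} with $C=2^{-1/2}$, and the sandwich $2\kappa^2\le h(r)\le 6\kappa^2$ for $d=2$ gives the two-sided claim; this is exactly the elementary computation intended, which the paper itself does not spell out (it defers to \cite{AHM} for $d\geq 3$). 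The only slip is in your parenthetical on endpoint behavior: as $r\to\infty$ one has $h(r)^{-1/2}\to(2\kappa)^{-d/2}$, not $\kappa^{-d/2}$, but this factor is absorbed into the constant and does not affect the argument.
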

Estimate \eqref{eq:h} shows that $h(r)^{-\frac{1}{2}}$ is uniformly bounded from above. Hence, \eqref{eq:h} implies that the propagator $e^{itH_1}$ satisfies in particular the dispersive estimate of the Schr{\"o}dinger semigroup operator $e^{it\Delta}$ provided that $\kappa>0$ is fixed. Again for $\kappa>0$ fixed, we observe that \eqref{eq:h} yields a regularizing effect at low frequencies for $d>2$. Indeed, 
\begin{equation}\label{eq:reg}
    h(r)^{-\frac{1}{2}}\leq C\kappa^{-1}r^{\delta},
\end{equation}
for any $\delta\in[0,\frac{d-2}{2}]$. This regularizing effect has been observed in \cite{GNT05} and subsequently exploited in \cite{GNT09} for the scattering properties of \eqref{eq:GP}. The different behavior depending on the dimension can be explained by the curvature of the hypersurface $|\xi|\sqrt{1+|\xi|^2}$ w.r.t. the hypersurface $|\xi|^2$.
Denoting $h_{\eps}(r):=\det(Hess(\phie(r))$, one has that $h_{\eps}(r)=h(\eps r)$ for $r>0$ in view of \eqref{eq:phiescaled}.
Exploiting \eqref{eq:phiescaled} and \eqref{eq:boundh}, we derive the respective dispersive estimate for $e^{it\Heps}$.
\begin{corollary}\label{coro:dispersive}
Let $d\geq 2$, $\phie$ as in \eqref{eq:phiescaled}, $R>0$ and let $\chi(r)\in C_c(0,\infty)$ be as in Lemma \ref{lem:GNT}. Then there exists a constant $C>0$ such that
\begin{equation}\label{eq:disp1}
\sup_{x\in \R^d}\left|\int_{\R^d}e^{it\phie(|\xi|)+ix\cdot\xi}\chi(|\xi|)\,\dd \xi\right|\leq C \frac{C}{\kappa^{\frac{d}{2}}}t^{-\frac{d}{2}}\left(\frac{\kappa\eps R}{\sqrt{1+(\eps\kappa)^2R^2}}\right)^{\delta},
\end{equation}
for any $\delta\in[0,\frac{d-2}{2}]$.
\end{corollary}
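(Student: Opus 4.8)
The plan is to derive Corollary~\ref{coro:dispersive} by combining the scaling relation \eqref{eq:phiescaled} with the stationary phase estimate of Lemma~\ref{lem:GNT} and the pointwise bound on the Hessian determinant from Lemma~\ref{lem:B3}. The key observation is that $\phie(|\xi|)=\eps^{-2}\phi(\eps|\xi|)$, so by the substitution $\eta=\eps\xi$ the oscillatory integral over the cutoff at scale $R$ transforms into the integral $I_\phi$ studied in Lemma~\ref{lem:GNT}, at the rescaled frequency scale $\eps R$ and with time $t/\eps^2$.

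First I would perform the change of variables $\eta=\eps\xi$ in the integral on the left-hand side of \eqref{eq:disp1}. This produces a factor $\eps^{-d}$ from $\dd\xi=\eps^{-d}\dd\eta$, replaces the phase $t\phie(|\xi|)+ix\cdot\xi$ by $(t/\eps^2)\phi(|\eta|)+i(x/\eps)\cdot\eta$, and turns the cutoff $\chi(|\xi|)$ into a new cutoff $\tilde\chi(|\eta|):=\chi(|\eta|/\eps)$ supported around $|\eta|\sim\eps R$. Since $\chi$ satisfies $|\chi^{(k+1)}(r)|\leq CR^{-k}$, the rescaled cutoff $\tilde\chi$ satisfies the analogous bound $|\tilde\chi^{(k+1)}(r)|\leq C(\eps R)^{-k}$, so it is admissible in Lemma~\ref{lem:GNT} at scale $\eps R$. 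Thus the integral equals $\eps^{-d}I_\phi(t/\eps^2,\,x/\eps,\,\eps R)$, and taking the supremum over $x\in\R^d$ and applying \eqref{eq:estGNT} with $R$ replaced by $\eps R$ yields
\begin{equation*}
\sup_{x\in\R^d}\left|\int_{\R^d}e^{it\phie(|\xi|)+ix\cdot\xi}\chi(|\xi|)\,\dd\xi\right|
\lesssim \eps^{-d}\left(\frac{t}{\eps^2}\right)^{-\frac{d}{2}} h(\eps R)^{-\frac{1}{2}}
= t^{-\frac{d}{2}}\,h_\eps(R)^{-\frac{1}{2}},
\end{equation*}
where I have used $h(\eps R)=h_\eps(R)$ and recombined the $\eps$-powers, the $\eps^{-d}$ cancelling against the $(\eps^{-2})^{-d/2}=\eps^d$ coming from the time factor.

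Next I would insert the pointwise bound \eqref{eq:boundh} of Lemma~\ref{lem:B3}, evaluated at $\eps R$, to estimate $h(\eps R)^{-1/2}$ from above by $C\kappa^{-d/2}\big(\kappa\eps R/\sqrt{1+(\kappa\eps R)^2}\big)^{(d-2)/2}$. Since the quantity $\kappa\eps R/\sqrt{1+(\kappa\eps R)^2}$ lies in $[0,1)$, its exponent may be lowered from $(d-2)/2$ to any $\delta\in[0,(d-2)/2]$ at the cost of a larger constant, exactly as in \eqref{eq:reg}; this is what produces the free parameter $\delta$ in the statement \eqref{eq:disp1}. Collecting the constants gives the claimed estimate.

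The argument is essentially a bookkeeping of the scaling, so there is no deep obstacle; the main point requiring care is verifying that the rescaled cutoff $\tilde\chi$ genuinely satisfies the derivative hypothesis of Lemma~\ref{lem:GNT} at the new scale $\eps R$ (so that the lemma applies with $R\mapsto\eps R$), and then tracking the powers of $\eps$ so that they cancel cleanly to leave a bound with no residual $\eps$-dependence outside the dispersive factor $\kappa\eps R/\sqrt{1+(\kappa\eps R)^2}$. The step I expect to be most delicate in the write-up is the monotonicity/boundedness argument that allows replacing the fixed exponent $(d-2)/2$ by an arbitrary $\delta$ in the stated range, which is trivial for $d\geq 3$ but degenerate at $d=2$, where the second part of Lemma~\ref{lem:B3} gives the two-sided bound $h(\eps R)^{-1/2}\sim\kappa^{-1}$ and forces $\delta=0$.
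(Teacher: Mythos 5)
Your proof is correct and follows essentially the same route as the paper's own: the change of variables $\eta=\eps\xi$ exploiting the scaling \eqref{eq:phiescaled}, the application of the stationary phase estimate \eqref{eq:estGNT} at the rescaled frequency scale $\eps R$ with time $t/\eps^2$, and the insertion of the bound \eqref{eq:boundh} from Lemma \ref{lem:B3}, lowering the exponent from $\frac{d-2}{2}$ to any $\delta\in\bigl[0,\frac{d-2}{2}\bigr]$ because the base lies in $[0,1)$. If anything, your write-up is slightly more careful than the paper's, which leaves the admissibility of the rescaled cutoff at scale $\eps R$ implicit and contains sign typos in the powers of $t$ that your bookkeeping handles correctly.
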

For $\delta=0$, estimate \eqref{eq:disp1} yields the classical dispersive estimate for the Schr{\"o}dinger operator. We recover the mentioned regularizing effect for low frequencies due to \eqref{eq:reg} in the $\eps$-dependent estimate \eqref{eq:disp1}. More precisely, one may trade a loss of regularity $R^{\delta}$ in order to gain a factor $\eps^{\delta}$ provided that $\kappa$ is fixed.

\begin{proof}
Introducing the variables $t'=\frac{t}{\eps^2}$, $x'=\frac{x}{\eps}$ and $\xi'=\eps\xi$, we find
\begin{eqnarray*}
\sup_{x\in \R^d}\left|\int_{\R^d}e^{it\phie(|\xi|)+ix\cdot\xi}\chi(|\xi|)\,\dd \xi\right|=\sup_{x'\in \R^d}\left|\int_{\R^d}e^{it'\phi(\eps|\xi|)+ix'\cdot\eps\xi}\chi\left({|\xi|}\right)\,\dd \xi\right|\\
=\sup_{x'\in \R^d}\eps^{-d}\left|\int_{\R^d}e^{it'\phi(|\xi'|)+ix'\cdot\xi'}\chi\left(\frac{|\xi'|}{\eps}\right)\,\dd \xi\right|
\leq C\eps^{-d} t'^{\frac{d}{2}}h(\eps r)^{-\frac{1}{2}}=C t^{\frac{d}{2}}h(\eps R)^{-\frac{1}{2}},
\end{eqnarray*}
where we used \eqref{eq:estGNT} in the first inequality. Next, it follows from \eqref{eq:boundh} that
\begin{equation*}
\sup_{x\in \R^d}\left|\int_{\R^d}e^{it\phie(|\xi|)+ix\cdot\xi}\chi(|\xi|)\,\dd \xi\right|\leq \frac{C}{\kappa^{\frac{d}{2}}}t^{-\frac{d}{2}}\left(\frac{\kappa\eps|\xi|}{\sqrt{1+(\eps\kappa)^2|\xi|^2}}\right)^{\delta},
\end{equation*}
for all $\delta\in [0,\frac{d-2}{2}]$.
\end{proof}

We stress that \eqref{eq:disp1} deteriorates for $\kappa$ small. The capillarity coefficient $\kappa$ corresponds to the contribution of the capillarity tensor to the dispersion relation. For $\kappa=0$, the system \eqref{eq:scaledQHD} reduces to the one for classical compressible fluid for which the acoustic dispersion is governed by the wave equation. In this paper, we consider the regime where $\kappa>0$ fixed. For the sake of simplicity, we set $\kappa=1$ in the following.

\subsection{Strichartz estimates}
We recall the Definition for Strichartz admissible exponents. 
\begin{definition}\label{def:betagamma}
The exponents $(p, q)$ are said to be \emph{$\mu$-admissible} if $2\leq q, r\leq\infty$, $(q, r, \mu)\neq(2, \infty, 1)$ and 
\begin{equation*}
\frac2q+\frac{\mu}{r}=\frac{\mu}{2}.
\end{equation*}
We say that a pair is Schr{\"o}dinger or wave admissible if $\mu=\frac{d}{2}$ or $\mu=\frac{d-1}{2}$ respectively.  
Further we introduce $\beta=\beta(r)=\frac12-\frac1r$. 
\end{definition}
The right-hand side of \eqref{eq:disp1} mimics the Fourier multiplier $m(\eps|\xi|)=\frac{\eps|\xi|}{\sqrt{1+\eps^2|\xi|^2}}$ to the power $\mu$, motivating us to define the pseudo-differential operator 
\begin{equation}\label{eq:Ueps}
    \Ueps=\frac{\eps\sqrt{-\Delta}}{\sqrt{1-\eps^2\Delta}}.
\end{equation}
We notice that $|\Ueps|\leq \eps|\nabla|$ as consequence of $m(\eps|\xi|)\leq \eps |\xi|$.
Further, we denote 
\begin{equation}\label{eq:alpha}
    \alpha_0=\frac{d-2}{2}\beta(r), \qquad \alpha_1=\frac{d-2}{2}\left(\beta(r)+\beta(r_1)\right).
\end{equation}
The Strichartz estimates follow from Corollary and Theorem 1.2 in \cite{KT98}.
\begin{lemma}
For $d\geq 2$, $\eps, R>0$ and $\alpha_0, \alpha_1$ as in \eqref{eq:alpha}, let $f\in L^{2}(\R^d)$ and $F\in L^{q'}(0,T;L^{r'}(\R^d))$ such that $\supp(\hat{f}), \supp(\hat F(t))\subset \{\xi\in\R^d\, : \, \frac{1}{2}R\leq |\xi|\leq 2R \}$ Then there exists $C>0$ independent from $T,\eps $ such that for any $(q,r)$, $(q_1,r_1)$ Schr{\"o}dinger admissible pairs and any $\alpha\in[0,\alpha_0]$, it holds
\begin{equation}\label{eq:Strichartz-lh}
\|\eith f\|_{L^q(0,T;L^r(\R^d))}\leq C  \|U_{\eps}^{\alpha}f\|_{L^2(\R^d)}.
\end{equation}
Moreover, for any $\alpha\in[0,\alpha_1]$ one has
\begin{equation}\label{eq:Strichartz-lnh}
\left\|\int_{s<t}e^{i(t-s)\Heps}F(s)\dd s\right\|_{L^q(0,T;L^r(\R^d))}\leq C^2  \left\| U_{\eps}^{\alpha}F\right\|_{L^{q_1'}(0,T;L^{r_1'}(\R^d))}.
\end{equation}
\end{lemma}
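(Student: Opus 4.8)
The plan is to run the abstract $TT^*$ machinery of Keel--Tao on the unitary group $\eith$, feeding in the frequency-localized dispersive estimate of Corollary~\ref{coro:dispersive} and carefully tracking how the gain factor $m(\eps R)=\frac{\eps R}{\sqrt{1+\eps^2 R^2}}$ propagates through the argument. Write $P$ for the Littlewood--Paley projection onto the dyadic annulus $\{\tfrac12 R\le|\xi|\le 2R\}$ containing $\supp\hat f$ and $\supp\hat F(t)$, so that $\eith f=\eith Pf$, and recall that $\eith$ is unitary on $L^2$, giving the energy bound $\|\eith g\|_{L^2}=\|g\|_{L^2}$ with no $\eps$-dependence. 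Setting $\kappa=1$ and taking the maximal exponent $\delta=\frac{d-2}{2}$, Corollary~\ref{coro:dispersive} bounds the convolution kernel of the localized propagator in $L^\infty$ and yields
\begin{equation*}
\|e^{i(t-s)\Heps}P^2\|_{L^1\to L^\infty}\le A\,|t-s|^{-\frac d2},\qquad A:=C\,m(\eps R)^{\frac{d-2}{2}}.
\end{equation*}
Thus $U(t):=\eith P$ satisfies the hypotheses of Theorem~1.2 in \cite{KT98} with decay exponent $\sigma=\frac d2$ and dispersive constant $A$, for which the Schr\"odinger admissibility of Definition~\ref{def:betagamma} ($\mu=\frac d2$) is exactly the admissibility required.

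For the homogeneous estimate \eqref{eq:Strichartz-lh} I would interpolate the bilinear kernel underlying $TT^*$ between the energy bound and the dispersive bound: Riesz--Thorin gives $\|e^{i(t-s)\Heps}P^2\|_{L^{r'}\to L^r}\lesssim A^{2\beta(r)}|t-s|^{-2\sigma\beta(r)}$, and the Hardy--Littlewood--Sobolev inequality in time closes the estimate exactly on the Schr\"odinger-admissible line $\frac1q=\sigma\beta(r)$, producing
\begin{equation*}
\|\eith f\|_{L^q(0,T;L^r)}\le C\,A^{\beta(r)}\|f\|_{L^2}=C\,m(\eps R)^{\frac{d-2}{2}\beta(r)}\|f\|_{L^2}=C\,m(\eps R)^{\alpha_0}\|f\|_{L^2}.
\end{equation*}
Since $m(\eps R)\le 1$ one has $m(\eps R)^{\alpha_0}\le m(\eps R)^{\alpha}$ for every $\alpha\in[0,\alpha_0]$, and the frequency localization converts this factor into the operator $\Ueps$ of \eqref{eq:Ueps}: the symbol $m(\eps|\xi|)$ is increasing and slowly varying, so $m(\eps|\xi|)\sim m(\eps R)$ uniformly on $\supp\hat f$, whence $\|\Ueps^{\alpha}f\|_{L^2}\sim m(\eps R)^{\alpha}\|f\|_{L^2}$ with constants independent of $\eps,R$. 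Combining these two observations gives \eqref{eq:Strichartz-lh}.

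For the retarded estimate \eqref{eq:Strichartz-lnh} I would invoke the inhomogeneous part of Theorem~1.2 in \cite{KT98} (the truncation $s<t$ being admissible there, and for non-endpoint pairs alternatively recovered from the Christ--Kiselev lemma). Morally one writes the Duhamel operator as $\eith$ composed with the dual operator $g\mapsto\int e^{-is\Heps}P\,F(s)\,ds$; each factor contributes one power of the dispersive constant, so the two admissible pairs $(q,r)$ and $(q_1,r_1)$ produce the product $A^{\beta(r)+\beta(r_1)}$, while the two applications of the homogeneous constant account for the $C^2$ in \eqref{eq:Strichartz-lnh}. Exactly as before, $A^{\beta(r)+\beta(r_1)}=m(\eps R)^{\frac{d-2}{2}(\beta(r)+\beta(r_1))}=m(\eps R)^{\alpha_1}\le m(\eps R)^{\alpha}$ for $\alpha\in[0,\alpha_1]$, and the frequency-localized comparison $\|\Ueps^{\alpha}F(t)\|_{L^2}\sim m(\eps R)^{\alpha}\|F(t)\|_{L^2}$, integrated in the dual time--space norm $L^{q_1'}(0,T;L^{r_1'})$, turns this gain into $\|\Ueps^{\alpha}F\|_{L^{q_1'}(0,T;L^{r_1'})}$, which is precisely \eqref{eq:Strichartz-lnh}.

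The step I expect to require the most care is the $\eps$-uniformity: every constant entering the Keel--Tao scheme must be independent of $\eps$. This is guaranteed because the scaling identity \eqref{eq:phiescaled} has already absorbed all $\eps$-dependence into the single factor $m(\eps R)$ in \eqref{eq:disp1}, and because the comparison $m(\eps|\xi|)\sim m(\eps R)$ on the dyadic annulus holds with absolute constants (the ratio interpolates between the low- and high-frequency regimes $m(s)\sim s$ and $m(s)\sim 1$, both scale-invariant under dyadic dilation, so the bounds on $m(\eps|\xi|)^{\alpha}/m(\eps R)^{\alpha}$ are uniform for $\alpha$ ranging over the bounded interval $[0,\alpha_1]$). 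The remaining bookkeeping --- verifying that $\alpha_0,\alpha_1$ as in \eqref{eq:alpha} are exactly the exponents arising from $\delta=\frac{d-2}{2}$, and noting that in $d=2$ one has $\alpha_0=0$, so the statement degenerates to the classical Schr\"odinger Strichartz estimate with the forbidden endpoint $(q,r,\mu)=(2,\infty,1)$ excluded by Definition~\ref{def:betagamma} --- is routine.
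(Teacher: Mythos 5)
Your proposal is correct and takes essentially the same route as the paper, whose entire proof is to invoke Theorem 1.2 of \cite{KT98} with the dispersive estimate \eqref{eq:disp1} and the $L^2$-isometry of $\eith$; your explicit constant tracking ($A^{\beta(r)}$ and $A^{\beta(r)+\beta(r_1)}$ with $A\sim m(\eps R)^{\frac{d-2}{2}}$) and the conversion of $m(\eps R)^{\alpha}$ into $\|\Ueps^{\alpha}\cdot\|$ via the frequency localization merely supply the bookkeeping the paper leaves implicit. One minor caveat: your illustrative Riesz--Thorin/Hardy--Littlewood--Sobolev derivation does not reach the endpoint pair $q=2$ for $d\geq 3$, but your primary appeal to Keel--Tao's Theorem 1.2 (whose constant dependence follows by rescaling time) covers it, so nothing is lost.
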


\begin{proof}
Given \eqref{eq:disp1} and considering the fact that $\eith$ is an isometry on $L^2(\R^d)$, we observe that Theorem 1.2 of \cite{KT98} yields the desired Strichartz estimates. 
\end{proof}

The next Proposition provides the final Strichartz estimates in Besov spaces. We stress that for $\eps=1$, we recover the Strichartz estimates provided by Theorem 2.1 in \cite{GNT05}. We denote by $\dot{B}_{q,r}^s(\R^d)$ the homogeneous Besov space
, see Chapter 5 of \cite{BL}.

\begin{proposition}\label{prop:Strichartz_A}
Let $d\geq 2$, $\eps>0$. Then there exists $C>0$ independent from $T,\eps $ such that for any $(q,r)$, $(q_1,r_1)$ Schr{\"o}dinger admissible pairs and $\alpha\in [0,\alpha_0]$, one has
\begin{equation}\label{eq:Strichartz-h}
\|\eith f\|_{L^q(0,T;L^r(\R^d))}\leq C \eps^{\alpha}\|f\|_{\dot{H}^{\alpha}(\R^d)},
\end{equation}
and any $\alpha\in[0,\alpha_1]$ it holds
\begin{equation}\label{eq:Strichartz-nh}
\left\|\int_{s<t} e^{i(t-s)\Heps} F(s)\dd s\right\|_{L^q(0,T;L^r(\R^d))}\leq   C \eps^{\alpha} \| F \|_{L^{q_1'}(0,T;\dot{B}_{r_1',2}^{\alpha}(\R^d))}.
\end{equation}
\end{proposition}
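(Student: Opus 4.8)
The plan is to reduce both inequalities to the frequency-localized bounds \eqref{eq:Strichartz-lh}--\eqref{eq:Strichartz-lnh} by a Littlewood--Paley decomposition, to replace the weight $\Ueps^\alpha$ by $\eps^\alpha|\nabla|^\alpha$ up to a multiplier bounded uniformly in $\eps$, and finally to reassemble the dyadic pieces into the Sobolev (resp. Besov) norm by Minkowski's inequality. Write $f=\sum_{j\in\Z}P_jf$, where $P_j$ denotes the homogeneous Littlewood--Paley projection onto $\{|\xi|\sim 2^j\}$; since $\eith$ is a Fourier multiplier it commutes with $P_j$, so $\eith P_jf$ is localized at frequency $R=2^j$ and \eqref{eq:Strichartz-lh} applies to each piece. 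The same holds for the Duhamel operator, which commutes with $P_j$ in the space variable.

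For \eqref{eq:Strichartz-h} I first invoke the square-function estimate together with Minkowski's inequality---both available because $q,r\ge2$---to get
\[
\|\eith f\|_{L^q(0,T;L^r)}\lesssim\Big(\sum_{j}\|\eith P_jf\|_{L^q(0,T;L^r)}^2\Big)^{1/2}.
\]
Applying \eqref{eq:Strichartz-lh} to each summand and then the pointwise symbol bound $m(\eps|\xi|)\le\eps|\xi|$, which by Plancherel gives $\|\Ueps^\alpha P_jf\|_{L^2}\le\eps^\alpha\||\nabla|^\alpha P_jf\|_{L^2}$, the right-hand side is dominated by $\eps^\alpha\big(\sum_j\||\nabla|^\alpha P_jf\|_{L^2}^2\big)^{1/2}\simeq\eps^\alpha\|f\|_{\dot H^\alpha}$, using the Littlewood--Paley characterization of $\dot H^\alpha=\dot B^\alpha_{2,2}$. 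Here the passage from $\Ueps^\alpha$ to $\eps^\alpha|\nabla|^\alpha$ is immediate in $L^2$, so no multiplier theorem is needed.

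For \eqref{eq:Strichartz-nh} I run the same square-function reduction on the output and bound each localized term by \eqref{eq:Strichartz-lnh}, reaching $\|\Ueps^\alpha P_jF\|_{L^{q_1'}(0,T;L^{r_1'})}$. The one genuinely new point is transferring the weight in $L^{r_1'}$ with $r_1'\ne2$: writing $\Ueps^\alpha=\eps^\alpha|\nabla|^\alpha(1-\eps^2\Delta)^{-\alpha/2}$, the factor $(1-\eps^2\Delta)^{-\alpha/2}$ is a Mikhlin multiplier whose constant is invariant under the scaling $\xi\mapsto\eps\xi$, hence it is bounded on $L^{r_1'}$ uniformly in $\eps$; this yields $\|\Ueps^\alpha P_jF\|_{L^{r_1'}}\lesssim\eps^\alpha 2^{j\alpha}\|P_jF\|_{L^{r_1'}}$. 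Since $q_1\ge2$, i.e. $q_1'\le2$, Minkowski's inequality now lets me exchange the $\ell^2_j$-sum with the $L^{q_1'}_t$-norm in the favorable direction,
\[
\Big(\sum_{j}2^{2j\alpha}\|P_jF\|_{L^{q_1'}(0,T;L^{r_1'})}^2\Big)^{1/2}\le\|F\|_{L^{q_1'}(0,T;\dot B^\alpha_{r_1',2})},
\]
which is exactly \eqref{eq:Strichartz-nh}.

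The main obstacle is this final reorganization of mixed norms: the left-hand norm $L^q_tL^r_x$ pushes the dyadic pieces into $\ell^2_j$ through the square function (which forces $2\le r<\infty$, the single borderline pair with $r=\infty$ in $d=2$ requiring the standard replacement of $L^\infty$ by $\dot B^0_{\infty,2}$ on the left), whereas the source must be read in $L^{q_1'}_t\dot B^\alpha_{r_1',2}$, and the two are reconciled only because the dual exponents satisfy $q_1'\le2$. Combined with the scale-invariant $L^{r_1'}$-boundedness of $(1-\eps^2\Delta)^{-\alpha/2}$, this is what delivers a constant $C$ independent of both $T$ and $\eps$.
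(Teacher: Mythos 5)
Your proof is correct and follows essentially the same route as the paper's: Littlewood--Paley decomposition, application of the frequency-localized estimates \eqref{eq:Strichartz-lh}--\eqref{eq:Strichartz-lnh} with the symbol bound $m(\eps|\xi|)\le\eps|\xi|$ for $\Ueps$, and dyadic reassembly via Minkowski's inequality and the identification $\dot H^\alpha=\dot B^\alpha_{2,2}$. Your extra care in the inhomogeneous case---transferring $\Ueps^\alpha$ in $L^{r_1'}$ through the uniformly ($\eps$-independent) bounded multiplier $(1-\eps^2\Delta)^{-\alpha/2}$ and exchanging norms using $q_1'\le 2$---is precisely the content the paper compresses into ``follows along the same lines,'' so it is a welcome elaboration rather than a deviation.
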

We notice that $\alpha_0=\alpha_1=0$ for $d=2$, the estimates \eqref{eq:Strichartz-h} and \eqref{eq:Strichartz-nh} do not provide decay in $\eps$ for $d=2$. In the following, for $R>0$ we denote by $P_{R}$ the frequency cut-off $P_{R}(f)=(\phi_{R}(\xi)\hat{f})^{\vee}$, where $\phi\in C_c^{\infty}$ with support around $|\xi|\sim R$.
\begin{proof}
Localized in frequencies of order $2^r$ inequality \eqref{eq:Strichartz-h} follows from \eqref{eq:Strichartz-lh} upon estimating the operator $\Ueps$ via the bound $m(\eps|\xi|)\leq \eps|\xi|$, see \eqref{eq:Ueps}. It remains to sum over the dyadic blocks,
\begin{eqnarray*}
& \|\eith f\|_{L^q(0,T;\dot{B}_{r,2}(\R^d))}\leq C\left\|\|P_{2^k}\eith f\|_{L^q(0,T;L^r(\R^d))} \right\|_{l_k^2}
\leq C \left\|\|P_{2^k} \Ueps f\|_{L^2(\R^d))} \right\|_{l_k^2}\\
&\leq C\eps^{\alpha}\left\|2^{k \alpha}\|P_{2^r} \Ueps f\|_{L^2(\R^d))} \right\|_{l_k^2}=C\eps^{\alpha}\|f\|_{\dot{B}_{2,2}^{\alpha}(\R^d)}, 
\end{eqnarray*}
for all $\alpha\in[0,\alpha_0]$. We used the Minkowski integral inequality to exchange the order of norms in the first inequality, estimate \eqref{eq:Strichartz-lh} in the second and the bound on $\Ueps$ in the third inequality. Estimate \eqref{eq:Strichartz-nh} follows along the same lines.
\end{proof}

Upon writing the solution of \eqref{eq:symAW} in terms of the Duhamel formula and observing that $\alpha_0\leq \alpha_1$, we infer the desired decay of $(\tseps,\tilde{\Jeps})$ from Proposition \ref{prop:Strichartz_A}.
\begin{corollary}\label{coro:3d}
Let $d\geq 2$ and $\eps>0$, $\alpha_0$ defined by \eqref{eq:alpha}. Then for any Schr{\"o}dinger admissible pairs $(q,r)$, $(q_1,r_1)$ and $\alpha\in[0,\alpha_0]$, one has
\begin{equation*}
    \|(\tseps,\tilde{J}_\eps)\|_{L^q(0,T;L^r(\R^d))}\leq C \eps^{\alpha}\left(\|(\tilde{\sigma}_{\eps}^0,\tilde{J}_\eps^0)\|_{\dot{H}^{\alpha}(\R^d)}+\|\tilde{F}_{\eps}\|_{L^{q_1'}(0,T;\dot{B}_{r_1',2}^{\alpha}(\R^d))}\right).
\end{equation*}
\end{corollary}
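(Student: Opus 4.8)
The plan is to write the solution of the symmetrized system \eqref{eq:symAW} via the Duhamel formula and then apply Proposition \ref{prop:Strichartz_A} termwise. The key observation is that system \eqref{eq:symAW} is diagonalized by passing to the complex unknown $w_{\eps} := \tseps + i \tJeps$, which satisfies the single scalar equation $\partial_t w_{\eps} - i \Heps w_{\eps} = i \tilde{F}_{\eps}$, since $\Heps$ is precisely the Fourier multiplier with symbol $\phie(|\xi|)$ encoding the operator $\frac{1}{\eps}(-\Delta)^{1/2}(1-\kappa^2\eps^2\Delta)^{1/2}$. Duhamel's formula then gives
\begin{equation*}
w_{\eps}(t) = \eith w_{\eps}^0 + i\int_{s<t} e^{i(t-s)\Heps}\tilde{F}_{\eps}(s)\,\dd s,
\end{equation*}
with $w_{\eps}^0 = \tilde{\sigma}_{\eps}^0 + i\tilde{J}_{\eps}^0$. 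Since $\tseps$ and $\tJeps$ are the real and imaginary parts of $w_{\eps}$, it suffices to bound $\|w_{\eps}\|_{L^q(0,T;L^r)}$.

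First I would bound the homogeneous contribution $\|\eith w_{\eps}^0\|_{L^q(0,T;L^r)}$ directly by the homogeneous Strichartz estimate \eqref{eq:Strichartz-h}, which yields the control $C\eps^{\alpha}\|w_{\eps}^0\|_{\dot{H}^{\alpha}}$ for any $\alpha\in[0,\alpha_0]$; passing back to the pair $(\tilde{\sigma}_{\eps}^0,\tilde{J}_{\eps}^0)$ is immediate since the $\dot{H}^{\alpha}$ norm of $w_{\eps}^0$ is comparable to that of the pair. Second I would bound the inhomogeneous (Duhamel) term by the inhomogeneous estimate \eqref{eq:Strichartz-nh}, obtaining the control $C\eps^{\alpha}\|\tilde{F}_{\eps}\|_{L^{q_1'}(0,T;\dot{B}^{\alpha}_{r_1',2})}$ for any $\alpha\in[0,\alpha_1]$. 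The final point is to reconcile the two admissible ranges of $\alpha$: the homogeneous term tolerates $\alpha\in[0,\alpha_0]$ while the inhomogeneous term tolerates the larger range $\alpha\in[0,\alpha_1]$. Since $\alpha_0\leq\alpha_1$ — this is the inequality noted just before the statement, following from $\beta(r_1)\geq 0$ in the definition \eqref{eq:alpha} — restricting to $\alpha\in[0,\alpha_0]$ makes both estimates simultaneously valid, and summing the two contributions gives the claimed bound.

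The argument is essentially a bookkeeping assembly once the diagonalization is in place, so I do not anticipate a genuine analytic obstacle; Proposition \ref{prop:Strichartz_A} already does all the dispersive work. The only point requiring mild care is the very first step, namely verifying that the real and imaginary parts of $e^{it\Heps}$ acting on $w_{\eps}^0$ reproduce exactly the semigroup generated by the skew-symmetric matrix operator in \eqref{eq:symAW}; this follows from the fact that the off-diagonal operator in \eqref{eq:symAW} is the self-adjoint multiplier $\Heps$, so the $2\times 2$ generator is $\Heps$ times the standard symplectic rotation, whose exponential is the rotation by angle $t\Heps$ with entries $\cos(t\Heps)$ and $\sin(t\Heps)$ — precisely the real and imaginary parts of $\eith$. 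I would state this correspondence explicitly so that applying the scalar estimates of Proposition \ref{prop:Strichartz_A} to each component of $(\tseps,\tJeps)$ is justified.
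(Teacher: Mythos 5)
Your proposal is correct and takes essentially the same route as the paper, whose entire proof is the one-line observation that writing the solution of \eqref{eq:symAW} via the Duhamel formula and noting $\alpha_0\leq\alpha_1$ lets one apply both estimates of Proposition \ref{prop:Strichartz_A} with $\alpha\in[0,\alpha_0]$. Your explicit complexification $w_{\eps}=\tseps+i\tJeps$, the verification that $\partial_t w_{\eps}-i\Heps w_{\eps}=i\tilde{F}_{\eps}$, and the $\cos(t\Heps)/\sin(t\Heps)$ correspondence simply spell out the standard diagonalization that the paper leaves implicit.
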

We observe that the change of variables is such that control of $(\seps,\Q(\Jeps))$ in terms of the initial data $P_{\leq \eps^{-1}}(\seps^0),P_{\geq \eps^{-1}}(\eps\nabla\seps^0),\Jeps^0$ can be achieved, see \cite{AHM} for details.
\subsection{Dispersion phenomena in 2D}
For $d=2$, estimates \eqref{eq:Strichartz-h} and \eqref{eq:Strichartz-nh} reduce to the classical Strichartz estimates for the Schr{\"o}dinger propagator. More precisely, $\alpha_0=\alpha_1=0$ and Proposition \ref{prop:Strichartz_A} does not provide any decay in $\eps$. However, for frequencies below the threshold $\eps^{-1}$, the propagator $\eith$ is well approximated by $e^{i\frac{t}{\eps}|\nabla|}$. More precisely, $\phie(|\xi|)\sim \frac{|\xi|}{\eps},$
for $|\xi|\lesssim\eps^{-1}$. Based on this observation, in \cite{BDS10} the authors introduce Strichartz estimates that are wave-like for low frequencies and provide decay also for $d=2$ but at the cost of higher loss of regularity compared to Proposition \ref{prop:Strichartz_A}, namely the typical loss of derivatives of the standard Strichartz estimates for the wave equation. By interpolation with \eqref{eq:Strichartz-h}, we achieve decay in $\eps$ for arbitrarily small loss of derivatives.

\begin{proposition}\label{prop:Strichartz_B}
Let $\eps>0$ and $\theta\in[0,1)$. Then, for any $\frac{2-\theta}{2}$-admissible pair $(q,r)$ and $s=3\beta\theta$, it holds
\begin{equation*}
    \|\eith f\|_{L^q(0,T;L^r(\R^2))}\leq C \eps^{\frac{s}{3}}\|f\|_{\dot{H}^{s}(\R^2)}.
\end{equation*}
\end{proposition}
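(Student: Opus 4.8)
The plan is to obtain the estimate by analytic interpolation, at a \emph{fixed} spatial Lebesgue exponent $r$, between two endpoint bounds for the propagator $\eith$: the Schr\"odinger-admissible estimate \eqref{eq:Strichartz-h} (the $\theta=0$ endpoint, which loses no derivatives and provides no $\eps$-decay in $d=2$) and a wave-admissible estimate of the type constructed in \cite{BDS10} (the $\theta=1$ endpoint, which provides $\eps$-decay at the cost of the full wave loss). The interpolation parameter $\theta$ will move the time exponent $q$ along the segment joining these two admissible pairs while simultaneously interpolating the source regularity from $\dot H^0$ to $\dot H^{3\beta}$ and the constant from $\eps^0$ to $\eps^{\beta}$.

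First I would record the two endpoints for $d=2$. At $\theta=0$, specialising \eqref{eq:Strichartz-h} to $d=2$ and $\alpha=\alpha_0=0$ gives, for every Schr\"odinger-admissible $(q_0,r)$,
\begin{equation*}
\|\eith f\|_{L^{q_0}(0,T;L^r(\R^2))}\le C\|f\|_{L^2(\R^2)}.
\end{equation*}
At $\theta=1$, I would invoke the wave-type estimate of \cite{BDS10}: for every wave-admissible $(q_1,r)$,
\begin{equation*}
\|\eith f\|_{L^{q_1}(0,T;L^r(\R^2))}\le C\eps^{\beta}\|f\|_{\dot H^{3\beta}(\R^2)},\qquad \beta=\beta(r)=\tfrac12-\tfrac1r.
\end{equation*}
If one wished to reprove this second bound rather than cite it, the route is: use the scaling \eqref{eq:phiescaled} to reduce $\eith$ to $e^{itH_1}$ at frequency $\eps R$; establish the frequency-localised \emph{wave} dispersive estimate (decay $|t|^{-\frac{d-1}{2}}=|t|^{-\frac12}$, coming from the angular curvature of the level spheres of $\phi$ rather than from the full Hessian used in Lemma \ref{lem:GNT}); feed it into Theorem 1.2 of \cite{KT98} to get the localised Strichartz bound with constant a power of the dispersive constant; and finally sum the Littlewood--Paley blocks via $\dot B^0_{r,2}\hookrightarrow L^r$ (for $2\le r<\infty$).

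Then I would interpolate. Keeping $r$ fixed, the single map $\eith$ is bounded $\dot H^0\to L^{q_0}_tL^r_x$ with norm $\lesssim 1$ and $\dot H^{3\beta}\to L^{q_1}_tL^r_x$ with norm $\lesssim\eps^{\beta}$. Complex interpolation of these compatible couples gives $[\dot H^0,\dot H^{3\beta}]_\theta=\dot H^{3\beta\theta}$ and $[L^{q_0}_tL^r_x,L^{q_1}_tL^r_x]_\theta=L^{q}_tL^r_x$ with $\frac1q=\frac{1-\theta}{q_0}+\frac{\theta}{q_1}$, while the operator norm interpolates multiplicatively as $1^{1-\theta}(\eps^{\beta})^{\theta}=\eps^{\beta\theta}$. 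Writing $s=3\beta\theta$ one has $\eps^{\beta\theta}=\eps^{s/3}$, the asserted bound. It remains to identify the admissibility of $(q,r)$: the index interpolates linearly, $\mu=(1-\theta)\frac{d}{2}+\theta\frac{d-1}{2}=\frac{d-\theta}{2}$, which for $d=2$ is exactly $\frac{2-\theta}{2}$, so $(q,r)$ is $\frac{2-\theta}{2}$-admissible as required.

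The routine part is this interpolation step (compatible couples, fixed inner exponent $r$, multiplicative behaviour of the constant). The real work, and the main obstacle, is the $\theta=1$ endpoint and in particular the precise $\eps$-dependence of its constant: for $d=2$ neither Lemma \ref{lem:B3} nor Corollary \ref{coro:dispersive} yields any low-frequency gain (there the admissible range degenerates to $\delta\in[0,\frac{d-2}{2}]=\{0\}$), so the wave loss cannot be read off from the Schr\"odinger dispersive estimate and must be produced by a genuinely wave-type stationary-phase analysis, carefully tracking how the rescaling \eqref{eq:phiescaled} distributes between the $|t|^{-1/2}$ decay and the frequency weight before the dyadic summation.
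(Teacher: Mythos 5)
Your argument is correct, and it reaches the result by a genuinely different route than the paper. You interpolate at the level of the two \emph{Strichartz inequalities} themselves, holding the spatial exponent $r$ fixed: the Schr\"odinger endpoint \eqref{eq:Strichartz-h} with $\alpha=0$ against a wave-type endpoint $\|\eith f\|_{L^{q_1}(0,T;L^r)}\lesssim \eps^{\beta}\|f\|_{\dot H^{3\beta}}$. Your bookkeeping is sound: at fixed $r$ the quantity $2/q=\mu\beta(r)$ is linear in $\mu$, so interpolating between $\mu=1$ and $\mu=\tfrac12$ lands exactly on the $\tfrac{2-\theta}{2}$-admissible $q$, and $[\dot H^0,\dot H^{3\beta}]_\theta=\dot H^{3\beta\theta}$ with multiplicative constant $\eps^{\beta\theta}=\eps^{s/3}$. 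The paper instead interpolates one level lower, at the frequency-localized \emph{dispersive kernel} bounds: the wave-like estimate $\|P_{2^k}(\eith f)\|_{L^\infty}\lesssim (t/\eps)^{-1/2}2^{3k/2}\|P_{2^k}f\|_{L^1}$ for $2^k\eps\le 1$ is interpolated with \eqref{eq:disp1} to get the intermediate decay $t^{-\frac{2-\theta}{2}}$ of \eqref{eq:disp2}, Keel--Tao \cite{KT98} is then applied once per dyadic block to obtain \eqref{eq:low}, the high frequencies $2^k\eps>1$ are treated separately by Sobolev embedding plus the trivial gain \eqref{eq:disp3} (which actually yields the stronger power $\eps^{2\beta\theta}$ there, see \eqref{eq:high}), and the blocks are summed in $\ell^2$. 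Your route buys brevity and modularity; the paper's buys self-containedness and a by-product improvement at high frequencies.

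The one point you must nail down is precisely the one you flagged: the $\theta=1$ endpoint with the \emph{exact} trade-off $\eps^{\beta}$ versus $\dot H^{3\beta}$, globally in frequency. This should be checked against the actual statements in \cite{BDS10} rather than assumed — note that naive rescaling of the $2$D wave propagator at speed $\eps^{-1}$ gives the different trade-off $\eps^{1/q_1}\|f\|_{\dot H^{2\beta-1/q_1}}$, so the form of the constant is not automatic. If the citation does not match verbatim, your fallback re-derivation does close, with one simplification worth recording: since $\phi_\eps'(r)\sim \eps^{-1}\sqrt{1+(\eps r)^2}\ge \eps^{-1}$ for all $r>0$, the angular stationary phase yields $\|P_{2^k}\eith\|_{L^1\to L^\infty}\lesssim \eps^{1/2}t^{-1/2}2^{3k/2}$ at \emph{all} frequencies, not only for $2^k\eps\le 1$ (at high frequencies one even has the better bound $t^{-1/2}2^{k}$), so your Keel--Tao plus dyadic summation needs no separate high-frequency case — alternatively borrow the paper's embedding step \eqref{eq:high}. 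Minor technicalities (complex interpolation of the homogeneous couple $(\dot H^0,\dot H^{3\beta})$ via its Littlewood--Paley retraction, and excluding $r=\infty$ where the square-function summation fails) are routine and affect the paper's proof equally.
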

We notice that for $\theta=0$, we recover inequality \eqref{eq:Strichartz-h}. For $\theta=1$, we recover the wave-like estimate for low frequencies. The decay in $\eps$ at high frequencies is achieved by Sobolev embedding. The respective non-homogeneous estimate follows from abstract results provided by \cite{KT98}.
\begin{proof}
Let $\eps>0$. Then, a standard stationary phase argument yields that for all $k\in \Z$ such that $2^k\eps\leq 1$ it holds
\begin{equation*}
   \|P_{2^k}\left(\eith f\right)\|_{L^{\infty}(\R^2)} \leq C (\frac{t}{\eps})^{-\frac{1}{2}}2^{\frac{3k}{2}}\|P_{2^k}f\|_{L^1(\R^2)},
\end{equation*}
By interpolation with \eqref{eq:disp1} we recover for any $\theta\in[0,1]$ that
\begin{equation}\label{eq:disp2}
     \|P_{2^k}\left(\eith f\right)\|_{L^{\infty}(\R^2)} \leq C \eps^{\frac{1}{2}\theta} t^{-\frac{2-\theta}{2}}2^{\frac{3k\theta}{2}}\|P_{2^k}f\|_{L^1(\R^2)},
\end{equation}
provided that $2^k\eps\leq 1$. 
Given that $\eith$ is an isometry on $L^2(\R^2)$ the following Strichartz estimates follows from Theorem 1.2 in \cite{KT98}. Let $(q,r)$ be $\frac{2-\theta}{2}$ admissible then for any $k$ such that $2^k\eps\leq 1$ one has
\begin{equation}\label{eq:low}
    \|P_{2^k}\left(\eith f\right)\|_{L^q(0,T;L^r(\R^2))}\leq C\eps^{\beta\theta}2^{3k\beta\theta}\|P_{2^k}f\|_{L^2(\R^2)},
\end{equation}
where we recall that $\beta=\beta(r)=\frac{1}{2}-\frac{1}{r}$.
For high frequencies, namely $k$ such that $2^k\eps>1$, we observe that if $(q,r)$ is $\frac{2-\theta}{2}$ admissible, then 
\begin{equation*}
    \|P_{2^k}\left(\eith f\right)\|_{L^q(0,T;L^r(\R^2))}\leq 2^{k{\beta\theta}}  \|P_{2^k}\left(\eith f\right)\|_{L^q(0,T;L^{\tilde{r}}(\R^2))},
\end{equation*}
with $(q,\tilde{r})$ Schr{\"o}dinger admissible. For $k$ such that $2^k\eps>1$ it follows from \eqref{eq:Strichartz-lh} that 
\begin{equation}\label{eq:high}
     \|P_{2^k}\left(\eith f\right)\|_{L^q(0,T;L^r(\R^2))}\leq C  2^{k\beta\theta}\|P_{2^k}f\|_{L^2(\R^2)}\leq  C \eps^{2\beta\theta} 2^{3k\beta\theta}\|P_{2^k}f\|_{L^2(\R^2)},
\end{equation}
where we have used that for any any $\alpha>0$ and $k\in \Z$ such that $2^k\eps>1$, one has
\begin{equation}\label{eq:disp3}
   \|P_{2^k}f\|_{L^{2}(\R^2)} \leq C \eps^{\alpha}2^{k\alpha}\|P_{2^k}f\|_{L^2(\R^2)}.
\end{equation}
Finally, for any $\theta\in[0,1]$ and $\frac{2-\theta}{2}$-admissible pair $(q,r)$, it holds
\begin{equation*}
    \|\eith f\|_{L^q\dot{B}_{r,2}^{0}}\leq \|P_{\leq \eps^{-1}}\left(\eith f\right)\|_{L^q\dot{B}_{r,2}^0}+\|P_{\geq \eps^{-1}}\left(\eith f\right)\|_{L^q\dot{B}_{r,2}^0}\leq C \eps^{\beta\theta}\|f\|_{\dot{B}_{2,2}^{3\beta\theta}},
\end{equation*}
where we applied \eqref{eq:low} and \eqref{eq:high} to the low and high frequency part respectively. 
\end{proof}

\section{Applications}\label{sec:app}
In \cite{AHM}, the refined Strichartz estimates of Proposition \ref{prop:Strichartz_A} have been introduced for low Mach number limit analysis of the three-dimensional quantum Navier--Stokes equations that can be seen as a viscous regularization of \eqref{eq:scaledQHD}. More precisely, the momentum equation is augmented by the viscous tensor $2\nu\dive(\reps\mathbf{D} \ueps)$. The result is valid for ill-prepared initial data of finite energy without further smallness or regularity assumptions. 
\begin{theorem}[\cite{AHM}]
{Let $d=3$, $\gamma>1$} and let $(\reps,\ueps)$  be a finite energy weak solution to (QNS) satisfying a Bresch-Desjardins type entropy estimate  with initial data of finite energy and let $0<T<\infty$ be an arbitrary time. Then $\reps-1$ converges strongly to $0$ in $L^{\infty}(0,T;L^{2}(\R^3))\cap L^4(0,T;H^s(\R^3))$ for any $0\leq s<1$. For any subsequence (not relabeled) $\sqrt{\reps}\ueps$ converging weakly to ${u}$ in $L^{\infty}(0,T,L^2(\R^3))$, then $u\in L^{\infty}(0,T;L^2(\R^3))\cap L^2(0,T;\dot{H}^1(\R^3))$ is a global weak solution to the incompressible Navier-Stokes equation with initial data ${u}_{\big| t=0}=\mathbf{P}(u_0)$ and $\sqrt{\reps}\ueps$ converges strongly to $u$ in $L^2(0,T;L_{loc}^2(\R^3))$.\\
Moreover, $\Q(\reps\ueps)$ converges strongly to $0$ in $L^2(0,T,L^q(\R^3))$ for any $2<q<\frac{12}{5}$.
Finally, the limiting solution $u$ also satisfies $u\in L^{\frac{4}{1+4s}-}(0,T;H^s(\R^3))$, for $0\leq s\leq\frac12$.
\end{theorem}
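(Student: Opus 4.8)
The plan is to run the Lions--Masmoudi low Mach number scheme in the degenerate-viscosity setting, replacing the torus compensated-compactness step by the whole-space dispersion of Section~\ref{sec:dispersive}. First I would collect the $\eps$-uniform a priori estimates. The energy \eqref{eq:en_QHDintro} controls $\sqrt{\reps}\,\ueps$ and $\nabla\sqrt{\reps}$ in $L^\infty_t L^2$ and, through the internal energy $\pi_\eps$ and $\gamma>1$, the fluctuation $\seps=\eps^{-1}(\reps-1)$ in $L^\infty_t L^2$; a Bresch--Desjardins entropy estimate adds $L^2_t$ control of $\nabla\sqrt{\reps}$ and of $\sqrt{\reps}\,\mathbf D\ueps$, which is precisely what tames the degenerate viscous and quantum terms. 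Since $\reps-1=\eps\seps$, the $L^\infty_t L^2$ bound on $\seps$ gives $\reps\to 1$ at rate $\eps$, and interpolating it against the additional density regularity furnished by the entropy yields $\reps-1\to0$ in $L^\infty_t H^s\hookrightarrow L^4_t H^s$ for every $s<1$, establishing the first assertion.

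Next I would Helmholtz-decompose the momentum $m_\eps:=\reps\ueps=\P m_\eps+\Q m_\eps$. For the solenoidal part, strong precompactness follows from an Aubin--Lions argument: the spatial bounds come from the above estimates, while $\partial_t\P m_\eps$ is bounded in a negative Sobolev norm from the $\P$-projected momentum equation, in which the stiff pressure and dispersive terms disappear and the convective, viscous and quantum remainders are controlled by the a priori bounds. This produces a divergence-free strong limit $u\in L^\infty_t L^2\cap L^2_t\dot H^1$. The irrotational part $\Q m_\eps$ is the acoustic component: after the symmetrization \eqref{eq:defitilde}--\eqref{eq:symAW} it is governed by $\eith$, and expressing it through Duhamel and invoking Corollary~\ref{coro:3d} gives the dispersive decay, e.g. $\eps^{1/6}$ in $L^2_t L^6$ from the endpoint pair $(2,6)$ with $\alpha_0=\tfrac16$. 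Interpolating this decay with the energy bound $\Q m_\eps\in L^2_t L^2$ yields $\Q(\reps\ueps)\to0$ in $L^2_t L^q$ for $2<q<\tfrac{12}{5}$, which is the third assertion.

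With these pieces I pass to the limit in the weak formulation. The only truly nonlinear term is the convection $\frac{m_\eps\otimes m_\eps}{\reps}$; splitting $m_\eps=\P m_\eps+\Q m_\eps$, the block $\P m_\eps\otimes\P m_\eps$ converges to $u\otimes u$ by strong convergence of $\P m_\eps$ and $\reps\to1$, while every block carrying a factor $\Q m_\eps$ vanishes by the acoustic decay above, so the oscillating acoustic waves do not contribute---exactly the effect that whole-space dispersion supplies. The viscous term tends to $\nu\Delta u$ and the quantum stress $\tfrac14\nabla\Delta\reps-\dive(\nabla\sqrt{\reps}\otimes\nabla\sqrt{\reps})$ tends to $0$ in $\mathcal D'$ since $\reps\to1$ with the controlled gradients. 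This identifies $u$ as a Leray solution of the incompressible Navier--Stokes equations with datum $\P(u_0)$, and the strong convergence $\sqrt{\reps}\,\ueps\to u$ in $L^2_t L^2_{loc}$ follows from that of $\P m_\eps$ together with the vanishing of $\Q m_\eps$ and $\reps\to1$. The last regularity $u\in L^{\frac{4}{1+4s}-}_t H^s$ for $0\le s\le\tfrac12$ I would get by interpolating the limiting bounds $u\in L^\infty_t L^2\cap L^2_t\dot H^1$.

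The main obstacle I expect is the limit passage in the convective term under the degenerate structure: because the viscosity coefficient degenerates with $\reps$, one controls not $\nabla\ueps$ but only $\sqrt{\reps}\,\mathbf D\ueps$ and $\nabla\sqrt{\reps}$, so each product and commutator must be arranged to use exactly the quantities bounded by the BD entropy. Closely tied to this is verifying that the forcing $\tFeps=(-\Delta)^{-1/2}\dive\Feps$ lies in the Besov class $L^{q_1'}_t\dot B^{\alpha}_{r_1',2}$ required by \eqref{eq:Strichartz-nh} for $\alpha\le\alpha_1$; this calls for fractional Leibniz estimates on $\frac{\Jeps\otimes\Jeps}{\reps}$ at low regularity, and it is the scarcity of such control that pins down the restricted range $q<\tfrac{12}{5}$.
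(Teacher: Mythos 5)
First, a point of order: this paper does not prove the theorem at all --- it is quoted from \cite{AHM}, and Section \ref{sec:app} only records why the refined Strichartz estimates (Proposition \ref{prop:Strichartz_A}, Corollary \ref{coro:3d}) are the decisive ingredient, namely that the degenerate viscosity $2\nu\dive(\reps\mathbf{D}\ueps)$ leaves $\ueps$ itself uncontrolled. Measured against the strategy of \cite{AHM} that the paper describes, your outline follows essentially the same route: uniform energy and Bresch--Desjardins bounds, Helmholtz splitting of $m_\eps=\reps\ueps$, Aubin--Lions compactness for $\P m_\eps$, dispersive decay of the acoustic part $\Q m_\eps$ via the symmetrization \eqref{eq:defitilde}--\eqref{eq:symAW} and Corollary \ref{coro:3d}, the block decomposition of the convective term, and recovery of the final regularity of $u$ by interpolating the Leray bounds --- which does suffice for the statement as quoted, since on a finite interval $L^{2/s}(0,T;H^s)\subset L^{\frac{4}{1+4s}}(0,T;H^s)$ for $0\le s\le\frac12$.

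The one step that would fail as written is your quantitative route to the third assertion. You claim $\eps^{1/6}$ decay of $\Q(\reps\ueps)$ in $L^2_tL^6$ from the endpoint pair $(2,6)$ with $\alpha=\alpha_0=\frac16$, and then interpolate against ``the energy bound $\Q m_\eps\in L^2_tL^2$''. Neither input is available. Without an upper bound on the density one only has $\sqrt{\reps}\in 1+L^\infty_t\dot H^1(\R^3)$, so $m_\eps=\sqrt{\reps}\cdot\sqrt{\reps}\ueps$ is bounded merely in $L^\infty_t(L^2+L^{3/2})$, not in $L^2_tL^2$ --- the exclusion of $q=2$ from the range is precisely the trace of this. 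More seriously, realizing the full gain $\alpha_0=\frac16$ at the endpoint would require the ill-prepared finite-energy data in $\dot H^{1/6}$ and the Duhamel forcing $\tFeps$ in the dual norm $L^{q_1'}_t\dot B^{1/6}_{r_1',2}$ of \eqref{eq:Strichartz-nh}, and the energy/BD bounds do not deliver either; had they done so, interpolation would yield decay for all $2<q\le 6$, whereas the theorem stops at $\frac{12}{5}$. In \cite{AHM} the window $2<q<\frac{12}{5}$ arises the other way around: one first determines the non-endpoint admissible pairs and the small values of $\alpha$ for which $\tFeps$ is genuinely controlled (fractional Leibniz estimates on $\frac{\Jeps\otimes\Jeps}{\reps}$ and the quantum and pressure terms, using exactly the BD quantities), and only then transfers the resulting decay to $L^2(0,T;L^q)$ by H\"older in time on the finite interval. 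Your closing paragraph in fact concedes this mechanism (``it is the scarcity of such control that pins down the restricted range''), which contradicts your earlier endpoint claim; the honest version of your argument is the one in that closing paragraph. A smaller caveat: for $1<\gamma<2$ the internal energy controls $\seps$ in $L^2$ only on the essential set, so the $L^\infty_tL^2$ convergence of $\reps-1$ requires the essential/residual decomposition (or the extra integrability supplied by $\nabla\sqrt{\reps}\in L^\infty_tL^2$), not a bare $L^\infty_tL^2$ bound on $\seps$.
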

The degeneracy of the viscosity tensor \cite{BDL} leads to a lack of control of the velocity field $\ueps$ making the refined Strichartz estimates crucial for the analysis in \cite{AHM}.
We mention that Proposition \ref{prop:Strichartz_B} provides control of the acoustic dispersion suitable for the low Mach number analysis of (QNS) for $d=2$.\\
In the forthcoming paper \cite{AHM18}, the authors plan to address the $\eps$-limit for system \eqref{eq:en_QHDintro} for $d=2,3$ with far-field behavior \eqref{eq:boundaryQHDintro}. The $\eps$-limit should be interpreted as scaling limit for the healing length $\eps$ going to $0$ and does not coincide with the semi-classical limit. The $\eps$-limit for \eqref{eq:scaledQHD} posed on $\T^d$ for $d=2,3$ has been studied in \cite{DM16}. Due to absence of significant dispersion on periodic domains, more regular solutions to \eqref{eq:scaledQHD} and convergence to local strong solutions is considered. 
\\
We conclude by observing that the acoustic dispersion in a fairly general class of Korteweg fluids \cite{BG,BDL,BGL} is still governed by the Boussinesq type equation \eqref{eq:densityfluct}. The present analysis is hence of interest for the more general framework of Euler- and Navier-Stokes-Korteweg fluids. 

\section*{Acknowledgements}
The first and second author acknowledge partial support through the INdAM-GNAMPA project \emph{Esistenza, limiti singolari e comportamento asintotico per equazioni Eulero/Navier--Stokes--Korteweg}. The second author acknowledges partial support by the Agence Nationale de la Recherche, project SINGFLOWS, grant ANR-18-CE40-0027-01. The third author acknowledges partial support by PRIN-MIUR project 2015YCJY3A\_003 \emph{Hyperbolic Systems of Conservation Laws and Fluid Dynamics: Analysis and Applications}. 

\end{document}